\documentclass[12pt]{amsart}
\usepackage{latexsym}
\usepackage{amsmath}
\usepackage{amsfonts}
\usepackage{amsthm}
\usepackage{amssymb}
\usepackage{ifthen}
\usepackage{enumerate}
\usepackage[T1]{fontenc}
\usepackage{dutchcal}
\usepackage{cite}

\usepackage[mathscr]{eucal}
\newcommand*{\rom}[1]{\expandafter\@slowromancap\romannumeral #1@}

\DeclareFontFamily{U}{mathx}{}
\DeclareFontShape{U}{mathx}{m}{n}{<-> mathx10}{}
\DeclareSymbolFont{mathx}{U}{mathx}{m}{n}
\DeclareMathAccent{\widehat}{0}{mathx}{"70}
\DeclareMathAccent{\widecheck}{0}{mathx}{"71}
\def\eq#1{{\rm(\ref{E#1})}}
\def\Eq#1#2{\ifthenelse{\equal{#1}{*}}
  {\begin{equation*}\begin{aligned}#2\end{aligned}\end{equation*}}
  {\begin{equation}\begin{aligned}\label{E#1}#2\end{aligned}\end{equation}}}

\newcounter{allenv}[section]

\newtheorem{thm}{Theorem}
\newtheorem*{thm*}{Theorem}

\newtheorem*{conj*}{Conjecture}
\newtheorem{coro}{Corollary}
\newtheorem{lemm}{Lemma}
\newtheorem*{lemm*}{Lemma}

\theoremstyle{remark}
\newtheorem*{remark*}{Remark}

\newtheorem*{exmp*}{Example}

\theoremstyle{definition}

\author[]{Tibor Kiss and Dóra Koroknai}
\title[]{Non-symmetrically $t$-affine functions revisited} 
\address{Institute of Mathematics,
University of Debrecen,
4002 Debrecen, Pf.~400, Hungary}
\email{kiss.tibor@science.unideb.hu}
\email{koroknai.dora1@mailbox.unideb.hu}
\keywords{$t$-convexity, $t$-affinity, conditional functional equation} 

\subjclass[2020]{39B62, 26A51}

\thanks{The first author’s research was supported by the HUN-REN Hungarian Research Network.}

\def\eq#1{{\rm(\ref{E#1})}}

\def\Eq#1#2{\ifthenelse{\equal{#1}{*}}
	{\begin{equation*}\begin{aligned}[]#2\end{aligned}\end{equation*}}
	{\begin{equation}\begin{aligned}\label{E#1}#2\end{aligned}\end{equation}}}

\begin{document}

\begin{abstract}
In 2014, Michał Lewicki and Andrzej Olbryś proved that if a real valued function $f$ defined on the real line satisfies the conditional functional equation
\Eq{*}{
f(tx + (1-t)y) = t f(x) + (1-t) f(y),\qquad x\leq y,
}
called non-symmetrically $t$-affine, then it is $t$-affine. That is, they concluded that $f$ must fulfill the above equality without any restriction on $x$ and $y$.

In the current study, first we show that the above conditional equation implies that the function in question is locally $t$-affine. Then we derive $t$-affinity on open intervals. Finally, we formulate our main result, which generalizes the theorem of Lewicki and Olbryś for any subinterval of $\mathbb{R}$.
\end{abstract}

\maketitle

\section{Introduction}

For an interval $I\subseteq\mathbb{R}$ of positive length and $t\in[0,1]$, a function $f:I\to\mathbb{R}$ is called \emph{$t$-convex} if
\Eq{c}{
f(tx+(1-t)y)\leq tf(x)+(1-t)f(y),\qquad x,y\in I.
}\label{eq:c}
Clearly, if $f$ is $t$-convex for every $t\in[0,1]$, then $f$ is \emph{convex} in the classical sense. We shall say that $f$ is \emph{locally $t$-convex on $I$} if, for any point $p\in I$, there exists an open interval $U\subseteq\mathbb{R}$ such that $p\in U$ and $f$ is $t$-convex on $U\cap I$.

We say that $f:I\to\mathbb{R}$ is \emph{$t$-affine} if both of $f$ and $-f$ are $t$-convex. More precisely, if
\Eq{a}{
f(tx + (1-t)y) = t f(x) + (1-t) f(y),\qquad x,y\in I.
}
Accordingly, we say that a function $f$ is \emph{locally $t$-affine} if both $f$ and $-f$ are locally $t$-convex.

In this paper we intend to investigate a conditional version of the above concepts. Namely, we will say that $f:I\to\mathbb{R}$ is \emph{non-symmetrically $t$-convex} if, for all $x,y\in I$, the inequality \eq{c} holds provided that $x\leq y$. It is easy to see that any $t$-convex function is non-symmetrically $t$-convex. In the paper \cite[Example 3.1]{LewOlb14}, the authors proved that the opposite is not necessarily true. To be more precise, they proved that if $t$ is a transcendental number, then there exists a non-symmetrically $t$-convex function, which is not $t$-convex. The existence of such a function, whenever $t$ is algebraic, is still an open question.

Now we introduce the concept of non-symmetrically $t$-affine functions. We say that $f$ is \emph{non-symmetrically $t$-affine} if $f$ and $-f$ are non-symmetrically $t$-convex, that is, if
\Eq{na}{
f(tx + (1-t)y) = t f(x) + (1-t) f(y),\qquad x,y\in I\text{ with }x\leq y.
}

In the same paper \cite{LewOlb14}, Lewicki and Olbryś proved that the conditional equation \eq{na} implies \eq{a} provided that $I=\mathbb{R}$. For the sake of completeness, we state their result.

\begin{thm*}[Lewicki--Olbryś, 2014]
Let $t\in(0,1)$ be fixed. If $f:\mathbb{R}\to\mathbb{R}$ is non-symmetrically $t$-affine then it is $t$-affine.
\end{thm*}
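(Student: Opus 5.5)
The plan is to reduce the statement to Cauchy's equation. First I normalize. I put $g(x):=f(x)-f(0)$. Since $t+(1-t)=1$, $g$ satisfies the same conditional equation \eq{na} on $\mathbb{R}$, and in addition $g(0)=0$. I intend to show that $g$ is additive and satisfies $g(tx)=tg(x)$ for every $x\in\mathbb{R}$. Once this is done, additivity gives $g((1-t)y)=g(y)-g(ty)=(1-t)g(y)$, so
\[
f(tx+(1-t)y)=f(0)+g(tx)+g((1-t)y)=f(0)+tg(x)+(1-t)g(y)=tf(x)+(1-t)f(y)
\]
for all $x,y$, which is exactly \eq{a}.

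Next I extract homogeneity on half-lines and a mixed-sign additivity. Putting $y=0$ in \eq{na} for $x\le 0$ gives $g(tx)=tg(x)$ for all $x\le 0$. Putting $x=0$ for $y\ge 0$ gives $g((1-t)y)=(1-t)g(y)$ for all $y\ge 0$. Now take arbitrary $x\le 0\le y$, so that \eq{na} applies. Combining it with these two identities gives
\[
g(tx+(1-t)y)=g(tx)+g((1-t)y).
\]
Since $t\in(0,1)$, the maps $x\mapsto tx$ on $(-\infty,0]$ and $y\mapsto (1-t)y$ on $[0,\infty)$ are bijections onto the same half-lines. Hence $g(u+v)=g(u)+g(v)$ whenever $u\le 0\le v$.

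The main step is to extend this mixed-sign Cauchy equation to all pairs. Taking $u=-v$ with $v\ge0$ yields $g(-v)+g(v)=g(0)=0$, so $g$ is odd. For $u,v\ge 0$, apply the mixed identity to the pair $u+v\ge0$ and $-v\le 0$. This gives $g(u)=g(u+v)+g(-v)=g(u+v)-g(v)$, i.e.\ $g(u+v)=g(u)+g(v)$. For $u,v\le 0$, oddness reduces this case to the previous one. Hence $g$ is additive on $\mathbb{R}$.

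Finally, oddness transfers homogeneity to the positive half-line: for $x>0$ we have $g(tx)=-g(-tx)=-tg(-x)=tg(x)$. This completes the argument. I expect the only delicate point to be the bookkeeping in the mixed-sign step, namely making sure that each application of \eq{na} uses a pair with $x\le y$. The restriction $t\in(0,1)$ is what makes the substitutions $u=tx$, $v=(1-t)y$ cover the required half-lines.
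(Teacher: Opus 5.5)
Your proof is correct, and it takes a genuinely different route from the paper. The paper does not prove this statement on its own; it obtains it as the case $I=\mathbb{R}$ of Theorem~\ref{M1}.

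\textbf{The paper's route.} For $x\le y$ near an arbitrary point $p$, the points $x$ and $y$ are reflected about $u=ty+(1-t)x$ to $\lambda_t(x,u)\le u\le\varrho_t(u,y)$. Four applications of \eq{na}, each to a correctly ordered pair, give the reversed identity $f(ty+(1-t)x)=tf(y)+(1-t)f(x)$ on a neighbourhood of $p$. The Nikodem--P\'ales localization lemma then turns this local $t$-affinity into global $t$-affinity.

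\textbf{Your route.} You fix the base point $0$ and read off $t$-homogeneity on $(-\infty,0]$ and $(1-t)$-homogeneity on $[0,\infty)$. You then convert \eq{na} into a Cauchy equation for mixed-sign pairs, and bootstrap it to full additivity via oddness and the pair $(-v,u+v)$. This is the kind of argument (base point $0$, reflection through $0$) that the paper attributes to the original Lewicki--Olbry\'s proof.

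\textbf{What each buys.} Your route gains brevity and self-containment. It needs no localization lemma, and it produces the representation $f=f(0)+g$, with $g$ additive and $t$-homogeneous, directly. The paper instead passes through Kuhn's theorem to obtain that representation for Theorem~\ref{M2}.

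The cost is that every step of your argument uses the whole line. The substitutions $u=tx$, $v=(1-t)y$ must exhaust both half-lines, and the oddness step and the pair $(-v,u+v)$ need points arbitrarily far on both sides of $0$. On a proper subinterval you would only obtain a Cauchy equation on a restricted domain, and you would need an extension theorem. The paper's reflection about $u$ is purely local and has no distinguished base point. That is what lets it cover arbitrary open intervals, and then, via Lemma~\ref{HomT} and Corollary~\ref{HomTc}, intervals containing their endpoints in Theorem~\ref{M2}.
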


In the proof, the authors make heavy use of the fact that the domain of the function contains 0. Moreover, due to the reflections with respect to $0$, they also need sufficient room on both the positive and the negative side, that is, they require the domain to be the entire real line. The concept of non-symmetrically $t$-affine functions do not require these conditions. Motivated by this, we extend this result to any non-empty open subinterval of $\mathbb{R}$ not necessarily containing zero. Furthermore, we succeeded in substantially simplifying the original proof appeared in \cite{LewOlb14}.

For simplicity, in our paper, $\alpha$ and $\beta$ will denote the infimum and the supremum of $I$, respectively. We emphasize that we do not require the boundedness of $I$. Finally, we note that $(0,1)$ will stand for the open interval $\{s\in\mathbb{R}\colon 0<s<1\}$.

\section{Non-symmetrically $t$-affine functions}

First of all we show that non-symmetric $t$-affine property implies that our function is locally $t$-affine.

\begin{thm}\label{thm1}
Let $I \subseteq \mathbb{R}$ be an open non-empty interval and fix $t \in(0,1)$. If $f:I\to\mathbb{R}$ is non-symmetrically $t$-affine, then it is locally $t$-affine.
\end{thm}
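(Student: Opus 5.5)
The plan is to fix $p\in I$ and $\delta>0$ with $[p-3\delta,p+3\delta]\subseteq I$, and to prove \eq{a} for all $x,y\in U:=(p-\delta,p+\delta)$. Since \eq{na} already covers $x\leq y$, only the case $y<x$ remains. Writing $a:=y<b:=x$, the goal is
\Eq{*}{
f\bigl((1-t)a+tb\bigr)=(1-t)f(a)+tf(b),
}
that is, $f$ must also satisfy the ``mirrored'' equation on ordered pairs of $U$. All auxiliary points will be kept inside $(p-3\delta,p+3\delta)$, which is exactly why the conclusion is only local.

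\emph{Step 1 (translation structure).} For $h>0$ small, apply \eq{na} to the pairs $(x,y)$ and $(x+h,y+h)$ with $x\leq y$. Subtracting shows that $\Delta_hf(u):=f(u+h)-f(u)$ is again non-symmetrically $t$-affine on a slightly smaller interval. I then intend to show that $\Delta_hf$ is constant near $p$. The idea is to compare the two ordered decompositions of one point $z$: as $T(u,v)=tu+(1-t)v$ with $u\leq v$, and as a $T$-combination of shifted points. Concretely, take $u<v$ and consider the points $T(u,v)$, $T(u,T(u,v))$ and $T(T(u,v),v)$. Each is obtained from an admissible ordered pair, so $f$, and hence $\Delta_hf$, restricted to the $t$-adic grid $\{\lambda u+(1-\lambda)v\}$ generated by admissible iterations is the affine interpolation of its values at $u$ and $v$. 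Choosing $h$ equal to a grid step, $\Delta_hf(u)$ and $\Delta_hf(u+h)$ both become determined by the same affine data, which forces $\Delta_hf(u+h)=\Delta_hf(u)$. Letting $u,v$ vary then yields the Jensen/Cauchy-type identity
\Eq{*}{
f(u+h)-f(u)=f(w+h)-f(w)
}
for all $u,w,h$ near $p$. By the standard extension of local Cauchy equations, $f=c+A$ on $U$ with $A:\mathbb{R}\to\mathbb{R}$ additive.

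\emph{Step 2 (conclusion).} Substitute $f=c+A$ into \eq{na}. The constants cancel and additivity leaves $A\bigl(t(x-y)\bigr)=tA(x-y)$ for $x\leq y$ in $U$. Hence $A(ts)=tA(s)$ for all small $s\leq 0$. Because $A$ is odd, this extends to small $s>0$, and by additivity to all $s\in\mathbb{R}$. Reading the computation backwards then gives \eq{a} for every $x,y\in U$, so $f$ and $-f$ are $t$-convex on $U$. This is local $t$-affinity.

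\emph{Main obstacle.} The difficulty lies in Step 1. The only usable operation is $(u,v)\mapsto tu+(1-t)v$ with $u\leq v$, so the set of coefficients $\lambda$ at which $f(\lambda u+(1-\lambda)v)$ is known to be affinely interpolated is a semigroup-like subset of $[0,1]$. It is not obvious that this set contains $1-t$, or that it contains two points at a common distance $h$ so that the differences can be compared. My first attempt is to exploit $T(p',q')$ with $p'=T(u,v)$ and $q'=(1-t)u+tv$, which is admissible when $t\geq\tfrac12$. The resulting linear relation determines $f(q')$ as soon as the point with coefficient $t^2+(1-t)^2$ is reachable. For $t<\tfrac12$, I would use the reflection $g(s)=f(-s)$, which converts the problem into the mirrored condition with parameter $1-t$. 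Should this direct algebraic route fail for general $t$, I would fall back on the difference-operator argument: iterate the non-symmetric equation for $\Delta_hf$ and use that the admissible grid is dense in $[u,v]$, which pins down $\Delta_hf$ along that dense set.
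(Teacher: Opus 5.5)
\paragraph{The gap is in Step 1.} Your Step 2 is correct. Once $f=c+A$ near $p$ with $A$ additive, \eq{na} gives $A(ts)=tA(s)$ for small $s\le 0$. Oddness and $\mathbb{Q}$-homogeneity extend this to all $s$, and \eq{a} follows on the neighbourhood. Step 1, however, carries the whole content of the theorem, and you have not proved it. The sentence ``choosing $h$ equal to a grid step, $\Delta_hf(u)$ and $\Delta_hf(u+h)$ both become determined by the same affine data'' assumes the admissible grid contains two adjacent gaps of equal length. In your ``main obstacle'' paragraph you say you do not know this. Neither fallback supplies it:
\begin{itemize}
\item The $T(p',q')$ route only trades the mirrored equation for the equally unproved reachability of the coefficient $t^2+(1-t)^2$.
\item The density argument cannot work, because $f$ has no regularity. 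Knowing $f$ (or $\Delta_hf$) on a dense set of grid points tells you nothing about its values elsewhere.
\end{itemize}
As written, the proof stops at its only nontrivial step.

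\paragraph{How to close it.} The missing verification is already contained in the three points you name. For $u<v$ let $m:=T(u,v)$, $m_-:=T(u,m)$, $m_+:=T(m,v)$. All three come from admissible ordered pairs, and
\[
m-m_-=t(m-u)=t(1-t)(v-u)=(1-t)(v-m)=m_+-m .
\]
Hence $m=\frac{m_-+m_+}{2}$, while $f(m_-)+f(m_+)=tf(u)+(1-t)f(v)+f(m)=2f(m)$. For $a<b$ near $p$, choose $u=\frac{a+b}{2}-\frac{b-a}{2t}$ and $v=\frac{a+b}{2}+\frac{b-a}{2(1-t)}$; this gives $m_-=a$ and $m_+=b$. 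So $f$ satisfies Jensen's equation on $(p-\delta,p+\delta)$ provided $(p-\delta/t,\,p+\delta/(1-t))\subseteq I$. The radius must therefore depend on $t$; your uniform $3\delta$ margin is not enough when $t<\frac13$ or $t>\frac23$. Kuczma's theorem then gives $f=c+A$ there, and your Step 2 finishes the proof.

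\paragraph{Comparison with the paper.} The paper takes $u=ty+(1-t)x$ and reflects $x,y$ to $\lambda_t(x,u)$ and $\varrho_t(u,y)$, whose midpoint is $u$. It then gets $f(u)=(1-t)f(x)+tf(y)$ from four applications of \eq{na}, with no additive functions and a $t$-independent margin. Your repaired route needs the representation theory of Jensen-affine functions. In return it shows $f=c+A$ with $A$ $t$-homogeneous. Patching overlapping neighbourhoods would then give $t$-affinity on the whole open interval directly, without the Nikodem--P\'ales lemma.
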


\begin{proof}
If $t=\frac12$, then the statement is trivial. Hence we may and do assume that $t\neq\frac12$. Let $p\in I$ be arbitrary but fixed and let
\Eq{*}{
0 < r < \min \left\{ \frac{\beta - p}{1 + 2t}, \frac{p - \alpha}{3 - 2t} \right\}.}

Since $\max\{\frac{1}{1+2t},\frac{1}{3-2t}\}<1$, it follows that $(p-r,p+r)\subseteq I$ holds, thus $U_p:=(p-r,p+r)$ is an open neighborhood of $p$ in $I$. Now we show that $f$ is $t$-affine on the interval $U_p$.

To do this, let $x,y\in U_p$ with $x\leq y$, and denote $u:=ty+(1-t)x$. We claim that
\Eq{conv}{
f(u)=tf(y)+(1-t)f(x)
}
holds. Obviously, to avoid the trivial cases, we can assume that $x\neq y$. 

Let us reflect the points $x$ and $y$ with respect to $u$ as follows. Define
\Eq{*}{
\lambda_t(x,u):=\frac1{t}(x-(1-t)u)
\qquad\text{and}\qquad
\varrho_t(u,y):=\frac1{1-t}(y-tu).
}
First we show that
\Eq{i}{
\alpha <\lambda_t(x,u)<\varrho_t(u,y)< \beta.
}

Indeed, the inequality $\alpha <\lambda_t(x,u)$ is equivalent to $\alpha<(2-t)x+(t-1)y$, thus focusing on the latter is sufficient. Taking into account the definition of $r$, we obtain that
\Eq{*}{
(2-t)x+(t-1)y>(2-t)(p-r)+(t-1)(p+r)=p+(2t-3)r>p-(p-\alpha)=\alpha,
}
where we also used that $2t-3<0$. The inequality $\varrho_t(u,y)< \beta$ can be verified in a similar way. Finally, inequality $\lambda_t(x,u)<\varrho_t(u,y)$ is equivalent to $2t(1-t)(x-y)<0$, which is true.

Observe that $t\lambda_t(x,u)+(1-t)\varrho_t(u,y)=tx+(1-t)y$. Due to the definition of $\lambda_t(x,u)$ and $\varrho_t(u,y)$, we can write
\Eq{*}{
x=t\lambda_t(x,u)+(1-t)u\qquad\text{and}\qquad
y=tu+(1-t)\varrho_t(u,y).
}
Therefore, we have
\Eq{*}{
f(x)
=f\big(t\lambda_t(x,u)+(1-t)u\big)
=tf\big(\lambda_t(x,u)\big)+(1-t)f(u)
}
and
\Eq{*}{
f(y)=tf(u)+(1-t)f\big(\varrho_t(u,y)\big).
}
Summing up the above equations side by side and considering the chain of inequalities \eq{i}, we obtain that
\Eq{*}{
f(x)+f(y)
&=tf\big(\lambda_t(x,u)\big)+(1-t)f\big(\varrho_t(u,y)\big)+f(u)\\
&=f\big(t\lambda_t(x,u)+(1-t)\varrho_t(u,y)\big)+f(u)\\
&=f(tx+(1-t)y)+f(u)=tf(x)+(1-t)f(y)+f(u).
}
Expressing $f(u)$ and applying the definition of $u$, we get
\Eq{*}{
f(ty+(1-t)x)=tf(y)+(1-t)f(x),
}
which is the desired equality \eq{conv}.
\end{proof}

To extend the $t$-affine property to open intervals, we apply the following localization theorem of Nikodem and Páles appeared originally in \cite[Corollary 6.]{PalNik04}.

\begin{lemm*}[Nikodem--Páles, 2004]
Let $t\in(0,1)$. A function $f:I\to\mathbb{R}$ is $t$-convex on $I$ if and only if, for each point $p\in I$, there exists a neighborhood $U\subseteq I$ such that $f$ is $t$-convex on $I\cap U$.
\end{lemm*}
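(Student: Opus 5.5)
The direction ``only if'' is immediate: if $f$ is $t$-convex on $I$, take $U=I$ for every $p\in I$ (or any open interval containing $p$); then $I\cap U=I$ and there is nothing to prove. The content is the converse, a local-to-global statement. I would prove it by a connectedness (continuation) argument applied to the $t$-convexity inequality for pairs of points.

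Fix $x<y$ in $I$ (the case $x=y$ is trivial, and the case $x>y$ is handled by the same argument with $t$ and $1-t$ interchanged). For $s\in[0,1]$ put $z_s:=x+s(y-x)$. Let $S$ be the set of those $s\in[0,1]$ for which $f$ satisfies \eq{c} for all pairs of points in $[x,z_s]$. Then $0\in S$, and $S$ is an initial segment of $[0,1]$. I would show that $S$ is both closed and open in $[0,1]$; connectedness then gives $S=[0,1]$, and since $x<y$ were arbitrary, $f$ is $t$-convex on $I$.

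Openness relies on the compactness of the current interval together with a Lebesgue-number argument. Cover $[x,z_s]$ by finitely many of the neighborhoods $U$ from the hypothesis. The goal is to show that \eq{c} holds for pairs $a,b$ where $a$ lies far to the left and $b$ lies just beyond $z_s$. Here $t$-convexity is not transitive in an obvious way, because the point $ta+(1-t)b$ can land far from both endpoints. So I would not attempt a direct gluing. Instead I would use a standard route: a locally $t$-convex function is locally bounded above on the relevant sets (or not), and then apply the known structure of $t$-convex functions.

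The cleanest path I would actually try is the Daróczy--Páles/Kuczma identity: $t$-convexity implies $\mathbb{Q}(t)$-convexity. Combined with local information, this yields that $f$ is locally Jensen-convex, i.e.\ midpoint-convex on each $U$. Midpoint convexity then globalizes via the classical result that locally Jensen-convex functions on an interval are Jensen-convex. That classical result is proved by the chaining argument along arithmetic progressions $x+k(y-x)/n$ with small mesh, where each consecutive triple lies in a single $U$. From global $\mathbb{Q}(t)$-convexity one recovers global $t$-convexity, since $t\in\mathbb{Q}(t)$.

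The main obstacle is the reduction step: showing that local $t$-convexity implies local $\mathbb{Q}(t)$-convexity, in particular local midpoint convexity. The Daróczy--Páles proof uses iterated $t$-combinations whose intermediate points stay inside the convex hull of the original pair. I would check that every point used in that identity lies in $[\min(x,y),\max(x,y)]$, so that it remains inside the neighborhood $U$. If that check fails, I would fall back on the direct chaining in the openness step, subdividing $[a,b]$ finely and using the slope monotonicity that $t$-convexity gives on each small piece.
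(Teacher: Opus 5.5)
The paper does not prove this lemma (it is quoted from Nikodem and P\'ales), so your plan has to stand on its own. It breaks at the last step.

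The first two steps are sound. The Dar\'oczy--P\'ales identity $\frac{x+y}{2}=t\bigl(t\frac{x+y}{2}+(1-t)y\bigr)+(1-t)\bigl(tx+(1-t)\frac{x+y}{2}\bigr)$ uses only points of $[x,y]$. So $t$-convexity on $I\cap U$ gives Jensen-convexity on $I\cap U$, and the step you single out as the main obstacle is actually unproblematic. Your chaining argument also works: with mesh smaller than half a Lebesgue number of the cover of $[x,y]$ and nondecreasing first differences, it globalizes Jensen-convexity.

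But what you obtain is global Jensen-convexity, i.e.\ global $\mathbb{Q}$-convexity, not global $\mathbb{Q}(t)$-convexity. The chaining is tied to the midpoint inequality and says nothing about weights in $\mathbb{Q}(t)\setminus\mathbb{Q}$, and you never establish even local $\mathbb{Q}(t)$-convexity. So the sentence that recovers global $t$-convexity from global $\mathbb{Q}(t)$-convexity rests on an unproved premise. You also cannot stop at Jensen-convexity. For irrational $t$, choose an additive $A:\mathbb{R}\to\mathbb{R}$ with $A(t)\neq tA(1)$, using a Hamel basis containing $1$ and $t$. It is Jensen-affine but not $t$-convex: applying the $t$-convexity inequality to the pairs $(x,0)$ and $(-x,0)$ would force $A(tx)=tA(x)$. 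The local $t$-convexity hypothesis therefore has to be used once more at the end.

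The missing step is a homothety towards $z=tx+(1-t)y$. For $x\neq y$ in $I$, put $x_n=\frac1n x+\bigl(1-\frac1n\bigr)z$ and $y_n=\frac1n y+\bigl(1-\frac1n\bigr)z$.
\begin{itemize}
\item Then $tx_n+(1-t)y_n=z$. For $n$ large, both $x_n$ and $y_n$ lie in the neighborhood of $z$ on which $f$ is $t$-convex, so $f(z)\le tf(x_n)+(1-t)f(y_n)$.
\item Rational convexity, which follows from global Jensen-convexity, gives $f(x_n)\le\frac1n f(x)+\bigl(1-\frac1n\bigr)f(z)$, and the analogous bound holds for $f(y_n)$.
\item Substituting and cancelling $\bigl(1-\frac1n\bigr)f(z)$ yields $f(z)\le tf(x)+(1-t)f(y)$.
\end{itemize}
Put this argument in place of the appeal to $\mathbb{Q}(t)$-convexity. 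Your route (local Jensen, then global Jensen, then global $t$-convexity) is then a complete proof, and you can drop the abandoned connectedness argument from your first paragraphs.
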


\begin{thm}\label{M1}
Let $I\subseteq\mathbb{R}$ be an open non-empty interval and fix $t \in (0,1)$. If $f : I \to \mathbb{R}$ is non-symmetrically $t$-affine, then it is $t$-affine.
\end{thm}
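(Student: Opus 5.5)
The plan is to combine Theorem~\ref{thm1} with the localization lemma of Nikodem and Páles, applied once to $f$ and once to $-f$.

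First, Theorem~\ref{thm1} shows that $f$ is locally $t$-affine on $I$. Concretely, for each $p\in I$ it gives an open neighborhood $U_p=(p-r,p+r)\subseteq I$ on which $f$ is $t$-affine. Being $t$-affine on $U_p$ means that both $f$ and $-f$ are $t$-convex on $U_p=I\cap U_p$.

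Next, apply the Nikodem--Páles lemma to $f$, using the neighborhoods $U_p$, $p\in I$. This yields that $f$ is $t$-convex on the whole of $I$. The same argument with $-f$ in place of $f$ shows that $-f$ is $t$-convex on $I$. Hence \eq{a} holds for all $x,y\in I$, which is exactly $t$-affinity.

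There is essentially no obstacle here. The only point to check is that the hypotheses of the lemma match what Theorem~\ref{thm1} provides. The lemma needs a neighborhood of each point of $I$ on which the function is $t$-convex. Theorem~\ref{thm1} gives precisely this, with the same $U_p$ serving for both $f$ and $-f$, since $t$-affinity on $U_p$ gives both one-sided inequalities there.
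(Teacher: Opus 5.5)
Your proposal is correct and is exactly the paper's argument. Theorem~\ref{thm1} gives that $f$ and $-f$ are locally $t$-convex, and the Nikodem--P\'ales localization lemma applied to each of them shows that both are $t$-convex on all of $I$, i.e.\ $f$ is $t$-affine (you state this conclusion correctly, whereas the paper's last line says ``$t$-convex'' where ``$t$-affine'' is meant).
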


\begin{proof}
If $f$ is non-symmetrically $t$-affine, then, in view of Theorem \ref{thm1}, it is locally $t$-affine. Consequently, $f$ and $-f$ are locally $t$-convex on $I$. Thus, by the above Lemma of Nikodem and Páles, it follows that $f$ and $-f$ are $t$-convex on the whole domain $I$. Consequently, $f$ is $t$-convex on $I$.
\end{proof}

As a special case, putting $t = \frac{1}{2}$ into inequality \eqref{eq:c}, we obtain
\begin{equation}
f\left(\frac{x + y}{2}\right) \leq \frac{f(x) + f(y)}{2},
\qquad x,y \in I.
\end{equation}
A function satisfying this inequality is called \emph{Jensen-convex}. If both $f$ and $-f$ are Jensen-convex, that is, $f$ satisfies
\Eq{Je}{f\left(\frac{x + y}{2}\right) = \frac{f(x) + f(y)}{2},\qquad x,y\in I,}
then $f$ is said to be \emph{Jensen-affine}.

As it is well-known (see M. Kuczma \cite{Kuczma}), there is a nice representation of Jensen-affine functions. Namely, if $f:I\to\mathbb{R}$ satisfies equation \eq{Je}, then there uniquely exist an additive function $A:\mathbb{R}\to\mathbb{R}$ and a constant $b\in\mathbb{R}$ such that
\Eq{Aa}{
f(x)=A(x)+b
}
for all $x\in I$. If $f:I\to\mathbb{R}$ is $t$-affine for some $t\in(0,1)$, then, in view of a celebrated result of Kuhn \cite{Kuhn84} or Daróczy and Páles \cite{DarPal87}, $f$ is also Jensen-affine. Consequently, we can clearly associate an additive function that produces $f$ in the \eq{Aa} manner. This function will be called \emph{the additive part of $f$}.

Before we formulate our next lemma, we note that the set $I-p$ denotes the interval $\{s-p\colon s\in I\}$.

\begin{lemm}\label{HomT}
If $f:I\to\mathbb{R}$ is $t$-affine for some $t\in(0,1)$, then its additive part is $t$-homogeneous.
\end{lemm}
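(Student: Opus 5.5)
We want $A(tx)=tA(x)$ for all $x\in\mathbb{R}$, where $f=A+b$ on $I$. By the representation \eq{Aa}, $t$-affinity of $f$ on $I$ becomes
\Eq{*}{
A(tx+(1-t)y)+b=tA(x)+(1-t)A(y)+b,\qquad x,y\in I.
}
Using additivity on the left, this reads
\Eq{*}{
A(tx)+A((1-t)y)=tA(x)+(1-t)A(y),\qquad x,y\in I.
}
Write $\varphi(s):=A(ts)-tA(s)$. Since $A((1-t)y)=A(y)-A(ty)$, the identity becomes $\varphi(x)=\varphi(y)$ for all $x,y\in I$. So $\varphi$ is constant on $I$.

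Now $\varphi$ is additive, being a difference of additive functions. Fix $p\in I$. For every $h\in I-p$ we get $\varphi(h)=\varphi(p+h)-\varphi(p)=0$. Since $I$ is open, $I-p$ is a neighbourhood of $0$.

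It remains to extend this to all of $\mathbb{R}$. An additive function vanishing on an interval $(-\delta,\delta)$ vanishes everywhere: for any $s\in\mathbb{R}$ choose $n\in\mathbb{N}$ with $|s/n|<\delta$, and then $\varphi(s)=n\varphi(s/n)=0$, using $\mathbb{Q}$-homogeneity of additive maps. Hence $A(ts)=tA(s)$ for all $s\in\mathbb{R}$, which is the claimed $t$-homogeneity.

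This last step is the only real point, namely passing from $I$ to $\mathbb{R}$, and openness (or merely positive length) of $I$ handles it. If $I$ is not open but has positive length, the same argument works with $h\in(I-p)\cap(p-I)$ for an interior point $p$, or via differences $x-y$ with $x,y\in I$.
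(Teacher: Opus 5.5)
Your proof is correct and is essentially the paper's argument. The paper also obtains $A(th)=tA(h)$ for $h$ in the neighbourhood $I-p$ of $0$ by applying $t$-affinity to the points $p+h$ and $p$, packaged through the normalized function $f_0(x)=f(x+p)-f(p)$ instead of your additive map $\varphi$, and then extends to all of $\mathbb{R}$ by $\mathbb{Q}$-homogeneity, dividing by $2^k$ where you divide by $n$.
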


\begin{proof}
If $I$ is empty or a singleton, then the statement is trivial. Therefore, we may and do assume that $I$ has a non-empty interior. Let $p\in I^\circ$ be any but fixed and define
\Eq{*}{
f_0:I-p\to\mathbb{R},\qquad
f_0(x):=f(x+p)-f(p).}
Then $f_0(0)=0$ and $f_0$ is $t$-affine as well. Indeed, since $f$ is $t$-affine, for $x,y\in I$, we have
\Eq{*}{
f_0(tx+(1-t)y)
&=f(tx+(1-t)y+p)-f(p)\\
&=f\big(t(x+p)+(1-t)(y+p)\big)-f(p)\\
&=tf(x+p)+(1-t)f(y+p)-f(p)=tf_0(x)+(1-t)f_0(y).}

Using this, for any $x\in I-p$, we can write that
\Eq{*}{
f_0(tx)=f_0(tx+(1-t)\cdot0)=tf_0(x)+(1-t)f_0(0)=tf_0(x),
}
that is, $f_0$ is $t$-homogeneous. This implies $f(tx+p)-f(p)=t\big(f(x+p)-f(p)\big)$ for $x\in I-p$. If $A$ stands for the additive part of $f$, then this latter equality reduces to $A(tx)=tA(x)$. Thus we obtained that there exists a neighborhood $U=I-p$ of $0$ on which $A$ is $t$-homogeneous.

To prove that $A$ is $t$-homogeneous on $\mathbb{R}$, let $v\in\mathbb{R}$ be arbitrary. Then there exists $k\in\mathbb{N}$ such that $\frac{1}{2^k}v\in U$. Then, based on the previous part of the proof, we have $A\big(t\frac{1}{2^k}v\big)=tA\big(\frac{1}{2^k}v\big)$. Since any additive function is $\mathbb{Q}$-homogeneous (cf. \cite{Kuczma}), we obtain that $A(tv)=tA(v)$, which finishes the proof.
\end{proof}

\begin{coro}\label{HomTc}
If $f:I\to\mathbb{R}$ is $t$-affine for some $t\in(0,1)$, then its additive part is $(1-t)$-homogeneous.
\end{coro}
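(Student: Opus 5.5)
The plan is to derive the corollary directly from Lemma~\ref{HomT}, using only additivity of the additive part. Let $A$ be the additive part of $f$. By Lemma~\ref{HomT}, $A(tv)=tA(v)$ for every $v\in\mathbb{R}$. Since $A$ is additive, $A(v)=A(tv+(1-t)v)=A(tv)+A((1-t)v)$. Substituting gives
\[
A\bigl((1-t)v\bigr)=A(v)-A(tv)=A(v)-tA(v)=(1-t)A(v),
\]
which is exactly $(1-t)$-homogeneity.

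An alternative route, which avoids the subtraction trick, is to note that $f$ is $t$-affine if and only if it is $(1-t)$-affine: condition \eq{a} is symmetric in $x$ and $y$ once we swap the roles of $x$ and $y$. Applying Lemma~\ref{HomT} with $1-t$ in place of $t$ then gives the claim immediately. For this we need that the additive part does not depend on $t$. That holds because the additive part is uniquely determined by the Jensen-affine representation \eq{Aa}.

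Neither route has a real obstacle. The one point to check is that $I$ has nonempty interior, so that the additive part is uniquely defined. In the degenerate case the statement is vacuous or trivial, exactly as in the proof of Lemma~\ref{HomT}. I would present the first argument, since it is a one-line computation that relies only on Lemma~\ref{HomT} and additivity.
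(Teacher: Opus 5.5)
Your argument is correct and is essentially the paper's. The paper only remarks that the set of $s$ for which an additive function is $s$-homogeneous is a subfield of $\mathbb{R}$, so it contains $1-t$ together with $1$ and $t$. Your computation $A((1-t)v)=A(v)-A(tv)=(1-t)A(v)$ is exactly the closure property needed for that case, and your alternative route, via the $t\leftrightarrow 1-t$ symmetry of $t$-affinity plus uniqueness of the additive part, is also valid.
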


\begin{proof}
Since the set of parameters for which an additive function is homogeneous forms a subfield of $\mathbb{R}$, the above statement follows easily.
\end{proof}

\section{The main result}

\begin{thm}\label{M2}
Let $I\subseteq\mathbb{R}$ be any interval and fix $t \in[0,1]$. If $f : I \to \mathbb{R}$ is non-symmetrically $t$-affine, then it is $t$-affine.
\end{thm}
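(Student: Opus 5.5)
Trivial cases come first. If $t\in\{0,1\}$, equation \eq{na} reads $f(y)=f(y)$ or $f(x)=f(x)$, so it holds for all $x,y$ without restriction and $f$ is $t$-affine. If $I$ is empty or a singleton there is nothing to prove. So assume $t\in(0,1)$ and that $I$ has non-empty interior $I^\circ=(\alpha,\beta)$. The restriction $f|_{I^\circ}$ is non-symmetrically $t$-affine on an open interval. By Theorem~\ref{M1} it is $t$-affine, hence (Kuhn, Daróczy--Páles) Jensen-affine. Therefore $f(x)=A(x)+b$ on $I^\circ$ for an additive $A$. By Lemma~\ref{HomT} and Corollary~\ref{HomTc}, $A$ is both $t$- and $(1-t)$-homogeneous. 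Define $g:=f-A-b$ on $I$. Then $g$ vanishes on $I^\circ$. Moreover $g$ is non-symmetrically $t$-affine on $I$, since $A+b$ is $t$-affine on all of $\mathbb{R}$ by the homogeneity of $A$. It therefore suffices to show $g\equiv 0$ at any endpoint that belongs to $I$; then $f=A+b$ on $I$, and this is $t$-affine.

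Suppose $\alpha\in I$. In \eq{na} take $x=\alpha$ and $y\in I^\circ$. Then $t\alpha+(1-t)y\in I^\circ$, so
\Eq{*}{0=g(t\alpha+(1-t)y)=tg(\alpha)+(1-t)g(y)=tg(\alpha).}
Since $t>0$, we get $g(\alpha)=0$. Similarly, if $\beta\in I$, take $x\in I^\circ$ and $y=\beta$. This gives $0=(1-t)g(\beta)$, hence $g(\beta)=0$. The ordering constraint $x\le y$ is exactly respected in both cases: the left endpoint plays the role of $x$ and the right endpoint the role of $y$, so no symmetric version is needed.

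The main obstacle is the verification that $A+b$ is $t$-affine on the whole of $\mathbb{R}$, which is what allows subtracting it while preserving \eq{na} on the closed parts of $I$. This is where Lemma~\ref{HomT} and Corollary~\ref{HomTc} enter:
\Eq{*}{A(tx+(1-t)y)=A(tx)+A((1-t)y)=tA(x)+(1-t)A(y).}
Everything else is bookkeeping about endpoints. One should also check that the case $x=y$ at an endpoint is trivial, and that in a degenerate interval both endpoints handled above coincide only when $I$ is a singleton, which was already excluded.
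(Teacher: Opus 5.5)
Your proof is correct and follows essentially the same route as the paper. You apply Theorem~\ref{M1} on $I^\circ$, write $f=A+b$ there, use Lemma~\ref{HomT} and Corollary~\ref{HomTc}, and place each endpoint in the admissible slot of \eq{na} to get $f(\alpha)=A(\alpha)+b$ and $f(\beta)=A(\beta)+b$; subtracting $A+b$ first and concluding $f=A+b$ on all of $I$ is just a tidier packaging of the paper's case-by-case computation.
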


\begin{proof}
To avoid the trivial cases, we suppose that $I$ is of positive length and $t\notin\{0,\frac12,1\}$.

Then, in view of Theorem \ref{M1}, $f$ is $t$-affine on the interior of $I$, consequently, it is also Jensen-affine. Thus we have \eq{Aa} for all $x\in I^\circ$ for some additive function $A:\mathbb{R}\to\mathbb{R}$ and constant $b\in\mathbb{R}$. 

If $\alpha\in I$, then, for any $u\in I^\circ$, we have that
\Eq{Af}{
A(t\alpha+(1-t)u)+b=tf(\alpha)+(1-t)\big(A(u)+b)=tf(\alpha)+(1-t)A(u)+(1-t)b.
}
Applying the additivity of $A$ and then Lemma \ref{HomT} and Corollary \ref{HomTc} on the left hand side of the above equation, \eq{Af} reduces to $f(\alpha)=A(\alpha)+b$. Now multiplying this with $1-t$ and adding $tA(u)+tb$ to both sides of the equation so obtained, we get the desired equality
\Eq{*}{
f(tu+(1-t)\alpha)=tf(u)+(1-t)f(\alpha)b.
}

If $\beta\in I$ and $u\in I^\circ$, then the computation is completely analogous.

Finally, assume that $\alpha,\beta\in I$. Then the computation in the previous part of the proof yields that $f(\alpha)=A(\alpha)+b$ and $f(\beta)=A(\beta)+b$. Multiplying the first and second equality with $1-t$ and $t$, respectively, and adding up the equations so obtained, we arrive at
\Eq{*}{
f\big(t\beta+(1-t)\alpha\big)=tf(\beta)+(1-t)f(\alpha),
}
thus we are done.
\end{proof}

\end{document}